\documentclass[11pt]{amsart}
\usepackage{enumerate,mathtools,amssymb, enumitem}
\usepackage{hyperref}

\theoremstyle{plain}
\newtheorem{theorem}{Theorem}

\newtheorem{proposition}[theorem]{Proposition}
\theoremstyle{definition}
\newtheorem{question}[theorem]{Question}

\newtheorem{remark}[theorem]{Remark}



\newcommand{\C}{\ensuremath{\mathbb{C}}}

\newcommand{\id}{\mathrm{id}}

\frenchspacing



\begin{document}

\title{Decomposing nuclear maps}

\author[J. Carri\'{o}n]{Jos\'{e} R. Carri\'{o}n}
\address{Department of Mathematics, Texas
  Christian University, Fort Worth, Texas 76129, USA.}
\email{j.carrion@tcu.edu}

\author[C. Schafhauser]{Christopher Schafhauser}
\address{Department of Mathematics,
University of Nebraska--Lincoln,
Lincoln, Nebraska 68588, USA}
\email{cschafhauser2@unl.edu}

\date{\today}
\subjclass[2010]{Primary: 46L05}

\begin{abstract}
  We show that the strengthened version of the completely positive
  approximation property of Brown, Carri\'{o}n, and White---where the
  downward maps are asymptotically order zero and the upward maps are
  convex combinations of order zero maps---is enjoyed by every nuclear
  order zero map.
\end{abstract}

\maketitle

Choi and Effros \cite{Choi-Effros78} and Kirchberg \cite{Kirchberg77}
proved that nuclearity for $C^*$-algebras is equivalent to the
completely positive approximation property: a $C^*$-algebra $A$ is
nuclear if and only if there exist nets of finite dimensional $C^*$-algebras
$F_i$ and of contractive completely positive (c.c.p.) maps
\begin{equation}
  A \xrightarrow{\psi_i} F_i \xrightarrow{\phi_i} A
\end{equation}
the composition of which tends to the identity map on $A$ in the
point-norm topology, in the sense that
\begin{equation}
  \label{eq:1}
  \| (\phi_i\psi_i)(a) - a\| \to 0 \quad\text{for all } a\in A.
\end{equation}
See \cite{Brown-Ozawa08} for the general theory of nuclearity for
$C^*$-algebras.

This approximation property has been strengthened by several authors
in various cases \cite{Blackadar-Kirchberg97, Winter-Zacharias10,
  Hirshberg-Kirchberg-etal12, Brown-Carrion-etal16}.  Notably, in
\cite{Brown-Carrion-etal16}, building on techniques from
\cite{Hirshberg-Kirchberg-etal12}, it was shown that all nuclear
$C^*$-algebras enjoy completely positive approximations as in
\eqref{eq:1}, where in addition all of the maps $\phi_i$ are convex
combinations of order zero maps and the maps $\psi_i$ are
asymptotically order zero, in the sense that $\|\psi_i(a)\psi_i(b)\|
\to 0$ whenever $a$ and $b$ are positive elements of $A$ with $ab=0$.

In this short note, we show that this improved approximation property
holds for any c.c.p. nuclear order zero map between two
$C^*$-algebras.  Recall that a c.c.p. map $\theta \colon A \rightarrow
B$ has \emph{order zero} if $\theta(a) \theta(b) = 0$ whenever $a$ and
$b$ are positive elements in $A$ with $ab = 0$; see
\cite{Winter-Zacharias09b}.

\begin{theorem}
  \label{thm:main}
  Let $A$ and $B$ be $C^*$-algebras.  If $\theta \colon A \rightarrow B$
  is a c.c.p. nuclear order zero map, then there are nets of finite
  dimensional $C^*$-algebras $(F_i)$ and of c.c.p. maps
  $A \xrightarrow{\psi_i} F_i \xrightarrow{\phi_i} B$ satisfying
  \begin{enumerate}
    \item $\| (\phi_i\psi_i)(a) - \theta(a) \| \to 0$
      for all $a\in A$,
    \item $\|\psi_i(a)\psi_i(b)\| \to 0$ for all $a,b\in A$ such that
      $ab=0$, and
    \item every $\phi_i$ is a convex combination of c.c.p. order zero
      maps.
  \end{enumerate}
\end{theorem}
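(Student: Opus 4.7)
The plan is to transfer the problem, via the Winter--Zacharias correspondence, to a statement about the associated *-homomorphism on the cone over $A$, and then apply (a suitable extension of) the Brown--Carri\'on--White theorem. Set $CA := C_0((0,1]) \otimes A$ and let $\iota \in C_0((0,1])$ denote $\iota(t) = t$. By the structure theorem of \cite{Winter-Zacharias09b}, the order zero map $\theta$ is naturally encoded in a *-homomorphism $\pi_\theta \colon CA \to B$ with $\pi_\theta(\iota \otimes a) = \theta(a)$ for every $a \in A$. The first step is to check that nuclearity of $\theta$ implies nuclearity of $\pi_\theta$ as a c.c.p. map; this should be a short argument, using that $\{\iota \otimes a : a \in A\}$ generates $CA$ and that $\pi_\theta$ is multiplicative, so nuclear approximations of $\theta$ assemble into nuclear approximations of $\pi_\theta$ by density.

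Next I would produce c.c.p. approximations $CA \xrightarrow{\tilde\psi_i} F_i \xrightarrow{\tilde\phi_i} B$ for $\pi_\theta$ satisfying the analogues of (i)--(iii): $\tilde\phi_i \tilde\psi_i \to \pi_\theta$ pointwise, each $\tilde\phi_i$ a convex combination of c.c.p. order zero maps, and $\tilde\psi_i$ asymptotically order zero. The decomposition of $\tilde\phi_i$ as a convex combination of order zero maps is standard for c.c.p. maps out of finite-dimensional C*-algebras, as developed in \cite{Hirshberg-Kirchberg-etal12, Brown-Carrion-etal16}. The asymptotic order zero property of $\tilde\psi_i$ is the technical heart: the aim is to adapt the Brown--Carri\'on--White argument, in which the multiplicativity of the identity map on a nuclear algebra was crucial, to the present setting where the multiplicativity of the *-homomorphism $\pi_\theta$ plays that role.

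Given such approximations of $\pi_\theta$, setting $\psi_i(a) := \tilde\psi_i(\iota \otimes a)$ and $\phi_i := \tilde\phi_i$ yields the theorem. Convergence in (i) is automatic since $(\phi_i \psi_i)(a) = (\tilde\phi_i \tilde\psi_i)(\iota \otimes a) \to \pi_\theta(\iota \otimes a) = \theta(a)$, and the convex combination property (iii) is inherited verbatim from $\tilde\phi_i$. For (ii), if $a, b \in A_+$ with $ab = 0$ then $(\iota \otimes a)(\iota \otimes b) = \iota^2 \otimes ab = 0$ in $CA$, so $\|\psi_i(a) \psi_i(b)\| = \|\tilde\psi_i(\iota \otimes a) \tilde\psi_i(\iota \otimes b)\| \to 0$ by the asymptotic order zero property of $\tilde\psi_i$. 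The main obstacle is the second step, namely producing $\tilde\psi_i$ asymptotically order zero; this is where the Brown--Carri\'on--White technique must be genuinely extended from the identity on a nuclear C*-algebra to a nuclear *-homomorphism whose source need not itself be nuclear.
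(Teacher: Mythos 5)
Your first step coincides with the paper's: by the Winter--Zacharias structure theorem one replaces $\theta$ by the induced $^*$-homomorphism on the cone $C_0(0,1]\otimes A$, and conditions (i)--(iii) for that homomorphism pull back along $a\mapsto \iota\otimes a$ exactly as you say. Two caveats, though. First, nuclearity of the induced cone homomorphism is not the routine density argument you sketch: the approximants $\phi_i\psi_i$ of $\theta$ are not multiplicative, so they give no control over products of the generators $\iota\otimes a$; this implication is a genuine theorem (the paper cites Theorem~2.10 of \cite{Gabe17}), so it is at least citable. The real gap is your second step, which is precisely the content of the theorem and for which you offer nothing beyond ``adapt Brown--Carri\'on--White.'' That adaptation does not go through as stated: the argument of \cite{Brown-Carrion-etal16} for $\mathrm{id}_A$ treats the finite summand of the bidual using amenability and quasidiagonality of traces on the nuclear algebra $A$, whereas here the relevant traces are pulled back to the cone $C_0(0,1]\otimes A$, which is quasidiagonal but in general neither nuclear nor exact, so those tools are unavailable (the paper explicitly advertises that it avoids them). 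What is actually needed, and what the paper supplies, is a von Neumann algebraic factorization theorem for weakly nuclear $^*$-homomorphisms $A\to N$: split $N$ into properly infinite and finite parts; in the finite case use weak nuclearity to deduce hyperfiniteness of $\pi_{\tau_A}(A)''$, take trace-preserving expectations onto an increasing sequence of finite-dimensional subalgebras, and correct the downward maps to be asymptotically order zero by a $\sigma$-ideal/Kirchberg $\varepsilon$-test argument; the asymptotically order zero (indeed asymptotically multiplicative) downward maps compatible with a weakly nuclear factorization come from Voiculescu's theorem together with quasidiagonality of the cone. Finally one must transfer the approximations from $B^{**}$ back into $B$ via Lemma~1.1 of \cite{Hirshberg-Kirchberg-etal12} and a Hahn--Banach convexity argument; your proposal never mentions the bidual, which is where the upward $^*$-homomorphisms are constructed and where the convex-combination structure in (iii) actually originates.

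Relatedly, your claim that writing $\tilde\phi_i$ as a convex combination of c.c.p. order zero maps ``is standard for c.c.p. maps out of finite-dimensional $C^*$-algebras'' is not correct: an arbitrary c.c.p. map from a finite-dimensional $C^*$-algebra need not admit such a decomposition, and if it did, condition (iii) would follow trivially from the ordinary completely positive approximation property, rendering the main results of \cite{Hirshberg-Kirchberg-etal12} and \cite{Brown-Carrion-etal16} nearly vacuous. The decomposition is available only because the upward maps are first arranged to be $^*$-homomorphisms into the bidual and are then approximated in the point-$\sigma$-strong$^*$ topology by order zero maps into $B$, after which convexity and Hahn--Banach convert weak approximation into norm approximation.
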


The refined versions of the completely positive approximation property
originate in a search for a theory of non-commutative covering
dimension of $C^*$-algebras in the work of Kirchberg and Winter on
decomposition rank \cite{Kirchberg-Winter04} and of Winter and
Zacharias on nuclear dimension \cite{Winter-Zacharias09b}.  In
both \cite{Kirchberg-Winter04} and \cite{Winter-Zacharias09b} one
finds conditions asking for an approximate factorization of the
identity map similar to that in Theorem~\ref{thm:main}, but with more
control on the upward maps $\phi_i$.

The refined approximation property given in
\cite{Brown-Carrion-etal16}---which is Theorem~\ref{thm:main} in the
special case when $\theta$ is the identity map on a nuclear
$C^*$-algebra---is a crucial part of \cite{CETWW}, where it is shown
that unital, separable, simple, nuclear, $\mathcal{Z}$-stable
$C^*$-algebras have nuclear dimension at most 1.  This latter result
plays an important role in the classification theory for nuclear
$C^*$-algebras through the results of \cite{Winter16a}. The improved
approximation properties mentioned above have also seen application to
near-inclusions of $C^*$-algebras \cite{Hirshberg-Kirchberg-etal12}.
Given the growing interest in the structure and classification of
$^*$-homomorphisms (as opposed to $C^*$-algebras) in papers such as
\cite{Dadarlat04, Gabe18, Schafhauser18, CETWW, CGSTW1,
  BosaGabeSimsWhite19}, we hope that the results presented here will
prove just as useful as their counterparts for $C^*$-algebras have.
\bigskip

The outline of the proof of Theorem~\ref{thm:main} is not altogether
new: the same broad steps used in \cite{Hirshberg-Kirchberg-etal12}
and \cite{Brown-Carrion-etal16} apply.  We point out, however, that
the proofs here, even once restricted to the case of the identity map
on a nuclear $C^*$-algebra, are new.  For instance, we avoid the use
of amenable and quasidiagonal traces, which are critical in
\cite{Brown-Carrion-etal16}.

In order to prove Theorem~\ref{thm:main}, we will prove the following
von Neumann algebraic version first.  Theorem~\ref{thm:main} will then follow
from a Hahn-Banach argument similar to that used in \cite{Brown-Carrion-etal16}
and \cite{Hirshberg-Kirchberg-etal12}.

\begin{theorem}
  \label{thm:weak-approximations}
  Let $A$ be a $C^*$-algebra and let $N$ be a von Neumann algebra.  If
  $\theta \colon A \to N$ is a weakly nuclear $^*$-homomorphism, then
  there are nets of finite dimensional $C^*$-algebras $(F_i)$ and of
  c.c.p. maps $A \xrightarrow{\psi_i} F_i \xrightarrow{\phi_i} N$
  satisfying
  \begin{enumerate}
  \item $(\phi_i\psi_i)(a) \to \theta(a)$ in the $\sigma$-strong$^*$
    topology (and therefore also in the $\sigma$-weak topology) for
    all $a\in A$, \label{item:wapps-1}
  \item $\| \psi_i(a) \psi_i(b) \| \to 0$ for all $a, b\in A$ such
    that $ab = 0$, and \label{item:wapps-3}
    \item every $\phi_i$ is a $^*$-homomorphism.\label{item:wapps-2}

  \end{enumerate}
\end{theorem}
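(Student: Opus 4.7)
My plan is to exploit the AFD structure of the image of $\theta$ and build the approximating maps as conditional expectations composed with $\theta$, then use the $^*$-homomorphism property of $\theta$ to obtain the asymptotic orthogonality of the downward maps.

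Step one is a reduction. Because $\theta$ is weakly nuclear, the bicommutant $M := \theta(A)''\subseteq N$ is an injective von Neumann algebra, hence (by Connes's theorem and its extensions beyond the separable case) approximately finite-dimensional. Any net in $M$ that converges $\sigma$-strong$^*$ to an element of $M$ still converges $\sigma$-strong$^*$ in $N$, so we may harmlessly replace $N$ by $M$ and assume $N$ itself is AFD.

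Step two is the construction. Pick an upward-directed net $(F_i)$ of finite-dimensional von Neumann subalgebras of $N$ with $\bigcup F_i$ $\sigma$-strong$^*$ dense in $N$. In the finite case take trace-preserving normal conditional expectations $E_i\colon N\to F_i$, which satisfy $E_i\to\id_N$ pointwise in the $\sigma$-strong$^*$ topology; the semifinite and type III cases reduce to this via cutting with finite projections and Takesaki's theorem on normal conditional expectations. Set $\psi_i := E_i\circ\theta\colon A\to F_i$ and let $\phi_i\colon F_i\hookrightarrow N$ be the inclusion. Item (iii) is immediate, and item (i) follows at once from $E_i\theta(a)\to\theta(a)$ for every $a\in A$.

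Step three is the verification of item (ii), which I expect to be the main obstacle. For positive $a,b\in A$ with $ab=0$ the $^*$-homomorphism $\theta$ gives $\theta(a)\theta(b)=0$, so the support projections $p,q\in N$ of $\theta(a),\theta(b)$ satisfy $pq=0$. If $p$ and $q$ happen to lie in $F_i$, the bimodule property of $E_i$ places $\psi_i(a)$ in $pF_ip$ and $\psi_i(b)$ in $qF_iq$, making the product zero exactly. In general an arbitrary finite family of projections in $N$ cannot be fit inside a single finite-dimensional subalgebra, so the plan is to exploit the freedom in the net to arrange the required containments approximately for any prescribed finite set of orthogonal pairs; one then has to convert approximate $\sigma$-strong$^*$ containment of the projections into a norm bound on the products $\psi_i(a)\psi_i(b)$, which live in the finite-dimensional algebras $F_i$. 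This weak-to-norm conversion, uniform over finite subsets of orthogonal pairs, is the technical heart of the argument and will, I expect, require a Hahn--Banach or convex-combination/averaging step in the spirit of \cite{Hirshberg-Kirchberg-etal12} and \cite{Brown-Carrion-etal16}.
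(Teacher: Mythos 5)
There are two genuine gaps. The first is your opening reduction: weak nuclearity of $\theta \colon A \to N$ does \emph{not} imply that $M=\theta(A)''$ is injective. Weak nuclearity only controls factorizations whose upward maps land in $N$, and hence only yields min-continuity of the product map $A \odot N' \to \mathcal{B}(H)$, not $A \odot M' \to \mathcal{B}(H)$. Concretely, \emph{every} c.c.p. map into $\mathcal{B}(H)$ is weakly nuclear (compress by finite rank projections $p$ and include $p\mathcal{B}(H)p$ back into $\mathcal{B}(H)$), so the inclusion $C^*_r(F_2)\subseteq \mathcal{B}(\ell^2 F_2)$ is a weakly nuclear $^*$-homomorphism whose image bicommutant is the non-injective factor $L(F_2)$. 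To pass from weak nuclearity of $\theta$ to hyperfiniteness of (a quotient of) the image bicommutant one needs a (normal) conditional expectation of $N$ onto it; this is exactly why the paper first splits $N=N_1\oplus N_\infty$ and carries out the bicommutant/hyperfiniteness argument only on the finite summand, where a faithful normal trace provides the expectation, while the properly infinite summand is treated by a completely different (Haagerup-type) argument that manufactures $^*$-homomorphisms $\phi$ out of the c.c.p. approximations supplied by Proposition~\ref{prop:downward-maps}. Your plan has no substitute for that infinite case, and as the $C^*_r(F_2)$ example shows, it cannot be subsumed into an AFD reduction.

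The second gap is condition (ii), which you yourself flag as unresolved, and the mechanism you propose will not close it. With $\psi_i=E_i\theta$, orthogonality $\theta(a)\theta(b)=0$ together with approximate containment of the support projections in $F_i$ only controls $\psi_i(a)\psi_i(b)$ in the $\sigma$-strong$^*$ (i.e.\ trace $2$-norm) sense; the statement requires smallness in \emph{operator norm}, and no Hahn--Banach or convexity argument converts $2$-norm smallness of specific elements of $F_i$ into norm smallness (for the same reason quasidiagonality is not automatic: $\|p\theta(a)p\,\theta(b)p\|$ is governed by commutators $[p,\theta(b)]$, which need not be small in norm). The paper handles this in two different ways: in the infinite case, (ii) comes from Proposition~\ref{prop:downward-maps}, whose proof passes to the cone $C_0(0,1]\otimes A$ (automatically quasidiagonal) and uses Voiculescu's theorem to build block-cutdown maps $\psi(a)=pap$ that are approximately multiplicative; in the finite hyperfinite case, the maps $E_i\theta$ are corrected by an order-zero trick, namely the kernel $J$ of the map from the norm ultraproduct $\prod_\omega F_i$ onto the tracial ultraproduct is a $\sigma$-ideal (Kirchberg's $\varepsilon$-test), producing $e\in J\cap \psi_\omega(A)'$ so that $\psi_i=(1-e_i)^{1/2}E_i\theta(\cdot)(1-e_i)^{1/2}$ satisfies (ii). Some version of these ingredients (or an equivalent device) is needed; without them, Steps 2--3 prove only (i) and (iii). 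A smaller issue, subordinate to the above: in the non-finite AFD setting your appeal to "cutting with finite projections and Takesaki's theorem" does not by itself give expectations $E_i\to\mathrm{id}_N$ pointwise $\sigma$-strong$^*$, since finite-dimensional subalgebras are generally not modular-invariant and martingale convergence is a trace-preserving phenomenon.
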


In fact, the approximate factorizations satisfying conditions (i) and
(ii) of Theorem~\ref{thm:weak-approximations} can be arranged for all
c.c.p. weakly nuclear maps.

\begin{proposition}
  \label{prop:downward-maps}
  Let $A$ be a $C^*$-algebra and let $N$ be a von Neumann
  algebra.  If $\theta \colon A \to N$ is a c.c.p. weakly nuclear map,
  then there are nets of finite dimensional $C^*$-algebras $(F_i)$
  and of c.c.p. maps $A \xrightarrow{\psi_i} F_i \xrightarrow{\phi_i} N$
  satisfying
  \begin{enumerate}
    \item $(\phi_i\psi_i)(a) \to \theta(a)$ in the $\sigma$-strong$^*$
      topology for all
    $a\in A$, and
    \item $\| \psi_i(a) \psi_i(b) \| \to 0$ for all $a, b\in A$ such
      that $ab = 0$.
  \end{enumerate}
  If $A$ is quasidiagonal, we may further arrange for
  \begin{enumerate}
  \item[\normalfont (ii$'$)] $\|\psi_i(a) \psi_i(b) - \psi_i(ab)\|
    \rightarrow 0$ for all $a, b \in A$.
  \end{enumerate}
\end{proposition}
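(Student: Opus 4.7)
Weak nuclearity of $\theta$ supplies nets of c.c.p.\ maps $\psi_i^{(0)}\colon A \to F_i$ and $\phi_i^{(0)}\colon F_i \to N$ with $\phi_i^{(0)}\psi_i^{(0)}(a) \to \theta(a)$ in the point-$\sigma$-weak topology. Passing to convex combinations (the standard upgrade, using that the $\sigma$-weak and $\sigma$-strong$^*$ closures of a convex set of bounded maps agree) promotes this to point-$\sigma$-strong$^*$ convergence, yielding (i). The content of the proposition is therefore (ii), the asymptotic order-zero condition on the downward maps.

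I would reduce (ii) to a local replacement lemma: given a c.c.p.\ $\psi\colon A \to F$ with $F$ finite-dimensional, a finite set $\Omega \subseteq A$ of contractions, and $\epsilon > 0$, there exist a finite-dimensional $\tilde F$ and c.c.p.\ maps $\tilde\psi\colon A \to \tilde F$ and $\eta\colon \tilde F \to F$ satisfying $\|\eta\tilde\psi(a) - \psi(a)\| < \epsilon$ for $a \in \Omega$ and $\|\tilde\psi(a)\tilde\psi(b)\| < \epsilon$ whenever $a,b \in \Omega$ have $ab = 0$. Given this, a diagonal argument applied to the $\psi_i^{(0)}$ (with $\epsilon \to 0$ and $\Omega$ exhausting a countable dense family of pairs in $\{(a,b) : ab = 0\}$) produces the required downward maps $\psi_i := \tilde\psi_i$ and upward maps $\phi_i := \phi_i^{(0)} \circ \eta_i$. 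The plan for the lemma itself is to exploit the Stinespring dilation $\psi(a) = V^*\pi(a)V$, where $\pi\colon A \to B(K)$ is a $*$-homomorphism and $V$ is bounded: since $\pi$ is multiplicative, $V^*\pi(\cdot)V$ is automatically order zero to the extent that $VV^*$ commutes with $\pi(\Omega)$. A fine spectral partition of $VV^*$ then decomposes $\psi$ into approximately order-zero pieces, which I would assemble into a finite-dimensional $\tilde F$ with compression map $\eta$.

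The main obstacle is the replacement lemma; once it is in hand, the rest is essentially bookkeeping. For (ii$'$), the quasidiagonality of $A$ directly supplies c.c.p.\ maps $A \to \tilde F_i$ that are asymptotically \emph{multiplicative}, not merely asymptotically order zero. Substituting these in place of the Stinespring-based $\tilde\psi$ inside the replacement lemma propagates full multiplicativity through $\tilde\psi_i$, yielding (ii$'$).
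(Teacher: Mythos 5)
Your overall architecture is fine: (i) follows from weak nuclearity plus the standard convexity upgrade from point-$\sigma$-weak to point-$\sigma$-strong$^*$ convergence, and the local replacement lemma you isolate is true and would indeed suffice, via a diagonal/net argument, to repair the downward maps. The gap is in your proposed proof of that lemma, which is where all of the content of the proposition lives. In the Stinespring picture $\psi=V^*\pi(\cdot)V$ (with $V$ an isometry after unitalizing), $VV^*$ is a projection, so ``a fine spectral partition of $VV^*$'' is vacuous; and if $V$ is merely a contraction, cutting $VV^*$ into spectral pieces does nothing toward the property you actually need, namely that $VV^*$ (or some compression replacing it) almost commutes with $\pi(\Omega)$. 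That almost-commutation is a quasidiagonality-type statement about the representation $\pi$; it is not a formal consequence of the Stinespring data, and nothing in your sketch produces it. Note also that your mechanism, if it worked, would give approximate \emph{multiplicativity} of $\tilde\psi$, which is strictly stronger than the order-zero condition and, by the observation \eqref{eq:qd} following Question~\ref{question}, cannot be compatible with factorizations of a faithful $\theta$ unless $A$ is quasidiagonal (consider $A=\mathcal{O}_2$, $\theta\colon A\to A^{**}$). So the right target for general $A$ is exactly asymptotic order zero, and it has to be reached by a different route than forcing a commuting projection.

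The paper supplies the two inputs your sketch is missing. First, in the quasidiagonal case, Voiculescu's absorption theorem is used to realize the matrix-algebra map $\psi'$ coming from weak nuclearity as a spatial compression $v^*(\cdot)v$ in a faithful essential representation, and the quasidiagonalizing finite-rank projection $p$ is chosen to almost commute with $\mathcal{F}$ \emph{and} to satisfy $\|pv-v\|<\varepsilon$; this compatibility is what allows the upward map $\phi=\phi'(v^*(\cdot)v)$ to still recover $\theta$ while $\psi=p(\cdot)p$ is approximately multiplicative. Your treatment of (ii$'$) has the same omission: quasidiagonality hands you asymptotically multiplicative maps into matrix algebras, but not ones tied to the given c.p.\ approximation, and without that tie there is no candidate $\eta$ with $\eta\tilde\psi\approx\psi$. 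Second, for general $A$ the paper passes to the cone $C_0(0,1]\otimes A$, which is always quasidiagonal by Voiculescu's homotopy-invariance theorem, applies the above to $\theta\pi$, and then precomposes with the order-zero map $a\mapsto\id_{(0,1]}\otimes a$; since $(\id_{(0,1]}\otimes a)(\id_{(0,1]}\otimes b)=0$ whenever $ab=0$, approximate multiplicativity on the cone becomes exactly condition (ii) on $A$. Any correct proof of your replacement lemma will need these (or comparable) ingredients; as it stands, the lemma is an unproved restatement of the hard part of the proposition.
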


\begin{remark}
  \label{rem:sigma-strong-top}
  If $N$ is a von Neumann algebra and $\rho$ is a normal state on $N$,
  then the seminorm $\|\cdot\|_{2,\rho}^\sharp$ on $N$ is defined by
  \begin{equation}
    \|x\|_{2,\rho}^\sharp
    = \rho\Big( \frac{xx^*+x^*x}{2} \Big)^{1/2},
    \quad x\in N.
  \end{equation}
  If $\{ \rho_i \}$ is a separating family of normal states on $N$,
  then the topology generated by $\{ \|\cdot\|^\sharp_{2, \rho_i} \}$
  agrees with the $\sigma$-strong$^*$ topology on any bounded subset
  of $N$; see \cite[III.2.2.19]{Blackadar06}.
\end{remark}

\begin{proof}[Proof of Proposition~\ref{prop:downward-maps}]
  First we assume $A$ is quasidiagonal and prove the stronger
  approximation condition.  We may assume $A$ is unital.  Let
  $\mathcal{F} \subseteq A \setminus \{0\}$ and $\mathcal{S}
  \subseteq S(N) \cap N_*$ be finite sets, and let $\varepsilon > 0$.
  Because $\theta$ is weakly nuclear, there exist an integer
  $\ell \geq 1$ and u.c.p. maps $A \xrightarrow{\psi'} M_\ell(\mathbb{C})
  \xrightarrow{\phi'} N$ satisfying
  \begin{equation}
    \label{eq:13}
    \| \phi'(\psi'(x)) - \theta(x) \|_{2, \rho}^\sharp <
    \|x\| \varepsilon
  \end{equation}
  for all $x \in \mathcal{F}$ and $\rho \in \mathcal{S}$.
  We will prove the existence of an integer $k$ and of u.c.p.
  maps $A \xrightarrow{\psi} M_k(\mathbb{C}) \xrightarrow{\phi} N$
  satisfying
  \begin{enumerate}[label = (\alph*)]
    \item $\| \phi(\psi(x)) - \theta(x) \|_{2, \rho}^\sharp < 5 \|x\| \varepsilon$
      for all $x \in \mathcal{F}$ and $\rho \in \mathcal{S}$, and
    \item $\| \psi(x) \psi(y) - \psi(xy) \| < \|x\| \|y\| \varepsilon$
      for all $x, y \in \mathcal{F}$.
  \end{enumerate}

  Replacing $A$ with $C^*(\mathcal{F} \cup \{1_A\}) \subseteq A$, we may assume
  $A$ is separable.  Fix a separable Hilbert space $H$ and a faithful
  representation $A \subseteq \mathcal{B}(H)$ with $A \cap
  \mathcal{K}(H) = 0$.  After identifying $M_\ell(\C)$ with $\mathcal{B}(\mathbb{C}^\ell)$,
  Voiculescu's Theorem\footnote{See \cite[Theorem~II.5.3]{Davidson96}
    for the version we are using here.} provides an isometry $v \colon
  \mathbb{C}^\ell \to H$ such that $\| v^* x v - \psi'(x) \| < \|x\|
  \varepsilon$ for all $x \in \mathcal{F}$.

  Since $A$ is quasidiagonal,
  there is a finite rank projection $p \in \mathcal{B}(H)$ such that
  $\| p x - x p \| < \varepsilon \|x\|$ for all $x \in \mathcal{F}$ and
  $\| p v - v \| < \varepsilon$.  Identify $\mathcal{B}(pH)$ with
  $M_k(\C)$ for some integer $k \geq 1$ and define $\psi \colon A \to
  M_k(\C)$ by $\psi(a) = p a p$.  Then
  \begin{equation}
    \label{eq:2}
    \| v^* \psi(x) v - \psi'(x) \| < 3 \|x\| \varepsilon
  \end{equation}
  and
  \begin{equation}
    \label{eq:4}
    \| \psi(x) \psi(y) - \psi(x y) \| < \|x\| \|y\| \varepsilon
  \end{equation}
  for all $x, y \in \mathcal{F}$.  Note that \eqref{eq:13} and \eqref{eq:2} imply
  \begin{equation}
    \label{eq:3}
    \| \phi'(v^*\psi(x)v) - \theta(x) \|_{2,\rho}^\sharp < 5\| x \| \varepsilon
  \end{equation}
  for all $x\in \mathcal{F}$ and $\rho \in \mathcal{S}$.  To complete
  the proof (in the quasidiagonal case), define $\phi =
  \phi'(v^*(\cdot)v)$.

  Now we handle the case of a general $C^*$-algebra $A$.  Let $\pi
  \colon C_0(0, 1] \otimes A \to A$ be the $^*$-homomorphism
  given by $\pi(f \otimes a) = f(1) a$ for all $f \in C_0(0, 1]$, $a
  \in A$.  By the homotopy invariance of quasidiagonality \cite{Voiculescu91},
  $C_0(0, 1] \otimes A$ is quasidiagonal.
  Applying the first part of the proof to the c.c.p. weakly nuclear map
  $\theta \pi$, there is a net $C_0(0, 1] \otimes A \xrightarrow{\psi_i'}
  F_i \xrightarrow{\phi_i} N$ satisfying (i) and (ii$'$).  The result
  follows by defining $\psi_i \colon A \to F_i$ by $\psi_i(a) =
  \psi_i'(\id_{(0,1]} \otimes a)$.
\end{proof}

\begin{remark}
  \label{rem:downward-maps}
  If $A$ and $B$ are $C^*$-algebras and $\theta \colon A \rightarrow
  B$ is a c.c.p. nuclear map, then a statement analogous to
  Proposition~\ref{prop:downward-maps} holds with the approximations
  in (i) taken in the operator norm.  The proof is the same except
  with operator norm estimates in \eqref{eq:13}, \eqref{eq:3}, and
  (a).
\end{remark}

\begin{proof}[Proof of Theorem~\ref{thm:weak-approximations}]
  Through a standard argument, we may assume $A$ is separable and $N$
  has separable predual.  Note that $N$ decomposes as a direct sum $N
  = N_1 \oplus N_\infty$ with $N_1$ finite and $N_\infty$ properly
  infinite.  Then the map $\theta$ decomposes accordingly as $\theta =
  \theta_1 \oplus \theta_\infty$ for weakly nuclear $^*$-homomorphisms
  $\theta_i \colon A \rightarrow N_i$, $i \in \{1, \infty\}$.  By
  handling each summand separately, we may assume that either $N$ is
  finite or $N$ is properly infinite.

  \textbf{Properly Infinite Case:} We may assume $A$ is unital.  Fix a
  faithful normal state $\rho$ on $N$, a finite set $\mathcal{F}
  \subseteq A$ of unitaries, and a finite set $\mathcal{G} \subseteq A
  \times A$ such that $xy = 0$ for all $(x, y) \in \mathcal{G}$, and
  let $\varepsilon > 0$.  By Proposition~\ref{prop:downward-maps},
  there are a finite dimensional $C^*$-algebra $F$ and c.c.p. maps $A
  \xrightarrow{\psi} F \xrightarrow{\phi'} N$ satisfying
  \begin{enumerate}[label = (\alph*)]
  \item $\| \phi'(\psi(x)) - \theta(x) \|_{2,\rho}^\sharp <
    \varepsilon$ for all $x \in \mathcal{F}$, and
  \item $\| \psi(x) \psi(y) \| < \varepsilon$ for all $(x, y) \in
    \mathcal{G}$.
  \end{enumerate}

  At this point we can follow the proof of Lemma~2.4 of
  \cite{Brown-Carrion-etal16}, which borrows heavily from Theorem~2.2
  of \cite{Haagerup85}.  Indeed the last two paragraphs of the proof
  of Lemma~2.4 of \cite{Brown-Carrion-etal16}, with $\theta$ and
  $\phi'$ in place of $\pi_\infty$ and $\theta$, produce a
  $^*$-homomorphism $\phi \colon F \rightarrow M$ such that
  \begin{equation}
    \label{eq:5}
    \| (\phi\psi)(x) - \theta(x)\|_{2,\rho}^\sharp < 2 \varepsilon^{1/2}
  \end{equation}
  for all $x\in \mathcal{F}$.

  \textbf{Finite Case:}
  Let $\tau_N$ be a faithful normal trace on $N$ so that on
  bounded subsets of $N$, the $\sigma$-strong$^*$ topology is induced
  by the norm
  \begin{equation}
    \label{eq:9}
    \|x\|_{2, \tau_N} = \tau_N(x^*x)^{1/2},
    \quad x \in N.
  \end{equation}
  Let $\tau_A = \tau_N \theta$, and note that $\theta$ induces a
  faithful normal $^*$-homomorphism $\bar{\theta} \colon
  \pi_{\tau_A}(A)'' \to N$.  As $N$ is finite, there is normal
  expectation $N \to \bar{\theta}(\pi_{\tau_A}(A)'')$ (see
  Lemma~1.5.11 in \cite{Brown-Ozawa08}).  Therefore, the corestriction
  of $\pi_{\tau_A}$ to $\pi_{\tau_A}(A)''$ is weakly nuclear.  It
  follows that $\pi_{\tau_A}(A)''$ is hyperfinite by the equivalence
  of (5) and (6) in Theorem~3.2.2 of \cite{Brown06}.  After replacing
  $N$ with $\pi_{\tau_A}(A)''$ and $\theta$ with $\pi_{\tau_A}$, we
  may assume that $N$ is hyperfinite, which we do for the rest of the
  proof.

  Let $F_i$ be an increasing sequence of finite dimensional
  $C^*$-subalgebras of $N$ with $\sigma$-strong$^*$ dense union and
  let $\phi_i \colon F_i \to N$ denote the inclusion maps.  Let
  $\tau_i = \tau_{N}|_{F_i}$ be the induced trace on $F_i$.  Fix a
  trace-preserving expectation $E_i \colon N \to F_i$ and set $\psi_i'
  = E_i \theta$. The sequence $(\psi_i')$ induces a $^*$-homomorphism
  to the tracial von Neumann algebra ultraproduct
  \begin{equation}
    \label{eq:10}
    \psi^\omega \colon A \to F^\omega = \prod^\omega (F_i,
    \tau_i)
  \end{equation}
  and a c.c.p. map to the $C^*$-algebra ultraproduct
  \begin{equation}
    \label{eq:11}
    \psi_\omega \colon A \to F_\omega = \prod_\omega
    F_i.
  \end{equation}
  Let $q \colon F_\omega \to F^\omega$ be the quotient map and note
  that $q \psi_\omega = \psi^\omega$.  Let $J = \ker q$.

  Proposition~4.6 of \cite{Kirchberg-Rordam14} proves that the uniform
  2-norm null sequences in a norm ultrapower form a $\sigma$-ideal
  (Definition 4.4 of \cite{Kirchberg-Rordam14}), by a standard
  application of Kirchberg's $\epsilon$-test.  A very slight
  modification of the proof shows that $J$ is a $\sigma$-ideal of
  $F_\omega$.  Then there is a positive contraction $e \in J \cap
  \psi_\omega(A)'$ such that $e x = x$ for all $x \in J \cap
  C^*(\psi_\omega(A))$.

  Consider the c.c.p. map $\Psi\colon A\to F_\omega$ defined by
  $\Psi(x) = (1-e) \psi_\omega (x)$, $x\in A$.  We claim $\Psi$ is
  order zero.  Indeed, if $x, y\in A_+$ satisfy $xy=0$, then
  $q(\psi_\omega(x)\psi_\omega(y)) = \psi^\omega(xy) = 0$, so
  $\psi_\omega(x)\psi_\omega(y) \in J$.  Then $\Psi(x)\Psi(y) =
  (1-e)^2 \psi_\omega(x)\psi_\omega(y) = (1-e)\big(
  \psi_\omega(x)\psi_\omega(y) - \psi_\omega(x)\psi_\omega(y)\big) =
  0$, as required.

  Represent $e$ by a sequence of positive
  contractions $(e_i)$ with $e_i \in F_i$ and define
  \begin{equation}
    \label{eq:12}
    \psi_i = (1 - e_i)^{1/2} \psi'_i(\cdot) (1 - e_i)^{1/2} \colon A
    \to F_i.
  \end{equation}
  Then the maps $A \xrightarrow{\psi_i} F_i \xrightarrow{\phi_i} N$
  have the desired properties.
\end{proof}

Finally, we prove Theorem~\ref{thm:main}.

\begin{proof}[Proof of Theorem~\ref{thm:main}]
  The structure theorem for order zero maps (see Corollary~4.1 of
  \cite{Winter-Zacharias09b}) implies there is a $^*$-homomorphism
  $\acute{\theta} \colon C_0(0, 1] \otimes A \rightarrow B$ such that
  $\theta(a) = \acute{\theta}(\mathrm{id}_{(0,1]} \otimes a)$ for $a
  \in A$.  By Theorem~2.10 in \cite{Gabe17}, $\acute{\theta}$ is
  nuclear (since $\theta$ is nuclear).  Hence, after replacing $A$
  with $C_0(0, 1] \otimes A$ and $\theta$ with $\acute{\theta}$, we
  may assume $\theta$ is a $^*$-homomorphism.

  Let $\mathcal{F}$ be a finite subset of $A$ and let $\varepsilon >
  0$.  Let $K$ be the subset of $\mathcal{B}(A,B)$ consisting of all
  c.c.p. maps $\eta\colon A\to B$ for which there exist a finite
  dimensional $C^*$-algebra $F$ and c.c.p. maps $A\xrightarrow{\psi} F
  \xrightarrow{\phi} B$ satisfying:
  \begin{itemize}
  \item $\eta = \phi\psi$,
  \item $\|\psi(a)\psi(b)\| < \varepsilon$ for all
    $a, b\in \mathcal{F}$ with $a b = 0$, and
  \item $\phi$ is a convex combination of c.c.p. order zero maps.
  \end{itemize}
  To prove the theorem, it suffices to show that $\theta$ is in the
  point-norm closure of $K$.

  By Theorem~\ref{thm:weak-approximations}, there are nets of finite
  dimensional $C^*$-algebras $(F_i)$ and c.c.p. maps $A
  \xrightarrow{\psi_i} F_i \xrightarrow{\phi_i} B^{**}$ satisfying
  conditions (i)--(iii) of Theorem~\ref{thm:weak-approximations}.
  Lemma~1.1 of \cite{Hirshberg-Kirchberg-etal12} implies that for each
  $i$ there is a net $(\phi_i^\lambda\colon F_i\to B)_{\lambda\in
    \Lambda}$ of c.c.p. order zero maps such that $\phi_i^\lambda(x)
  \to \phi_i(x)$ in the $\sigma$-strong$^*$ topology (and therefore in
  the $\sigma$-weak topology) for all $x\in F_i$.  Therefore, we may
  assume that the image of $\phi_i$ is contained in $B$ for all $i$.
  Then, for all sufficiently large $i$, we have that $\phi_i \circ
  \psi_i \in K$.  Thus $\theta$ is in the point-weak closure of $K$.
  As $K$ is a convex set in $\mathcal{B}(A,B)$, its point-norm and
  point-weak closures coincide, by the Hahn-Banach Theorem\footnote{%
    In fact, this is consequence of the Hahn-Banach Separation
    Theorem.  See Lemma~2.3.4 of \cite{Brown-Ozawa08} for a proof.  }%
  .  Next, recall that the weak topology on $B$ is the same as the
  restriction to $B$ of the weak-$^*$ topology on $B^{**}$ (with $B$
  regarded as a subspace of $B^{**}$ via the standard map).  But the
  weak$^*$-topology on $B^{**}$ is precisely the $\sigma$-weak
  topology (see e.g. \cite[I.8.6]{Blackadar06}), so the theorem
  follows.
\end{proof}

For a nuclear $^*$-homomorphism $\theta \colon A \rightarrow B$ as in
Theorem~\ref{thm:main}, it is natural to consider a stronger
approximation property where the maps $\psi_i$ are required to be
approximately multiplicative in the sense that
\begin{enumerate}
\item[(ii$'$)] $\| \psi_i(a) \psi_i(b) - \psi_i(ab) \| \rightarrow 0$
  for all $a, b \in A$.
\end{enumerate}

\begin{question}\label{question}
  For which nuclear $^*$-homomorphisms $\theta \colon A \rightarrow B$
  between $C^*$-algebras $A$ and $B$ are there approximate
  factorizations $A \xrightarrow{\psi_i} F_i \xrightarrow{\phi_i} B$
  of $\theta$ as in Theorem~\ref{thm:main} that also satisfy
  {\normalfont(ii$'$)}?
\end{question}

A characterization of such $\theta$ is likely related to some
appropriate form of quasidiagonality.  In particular, note that if
$\theta$ is faithful and $(\phi_i)$, $(\psi_i)$, and $(F_i)$ are as in
Question~\ref{question}, then for all $a \in A$, we have
\begin{equation}\label{eq:qd}
  \|\psi_i(a)\| \geq \|\phi_i(\psi_i(a))\| \rightarrow \|\theta(a)\| = \|a\|.
\end{equation}
Now, the net $(\psi_i)$ is an approximate embedding of $A$ into finite
dimensional $C^*$-algebras, and this shows that $A$ is
quasidiagonal. We end with some further comments and partial results
related to the Question~\ref{question}.

First, Theorem~2.2 of \cite{Brown-Carrion-etal16} states that, for a
separable nuclear $C^*$-algebra $A$, $\id_A$ enjoys approximate
factorizations as in Question~\ref{question} if and only if $A$ is
quasidiagonal and all traces on $A$ are quasidiagonal.  It is not hard
to show that the separability assumption is superfluous.

Second, if $A$ is quasidiagonal and $B$ has no traces, then every
nuclear $^*$-homomorphism $\theta \colon A \rightarrow B$ enjoys
approximate factorizations as in Question~\ref{question}.  This
follows as in the proof of Theorem~\ref{thm:main} with only slight
modifications.  The key observations are that
\begin{enumerate}[label*=(\alph*)]
\item because $A$ is quasidiagonal, the approximate factorizations of
  the composition $A \xrightarrow{\theta} B \hookrightarrow B^{**}$
  given in Proposition~\ref{prop:downward-maps} can be taken to
  satisfy (ii$'$), and
\item because $B$ has no traces, $B^{**}$ is properly infinite, and so
  the finite case in the proof of
  Theorem~\ref{thm:weak-approximations} is not needed.
\end{enumerate}

Third, one might also compare Question~\ref{question} with the recent
results of \cite{BosaGabeSimsWhite19}, which address the nuclear
dimension and decomposition rank of $\mathcal{O}_\infty$-stable and
$\mathcal{O}_2$-stable $^*$-homomorphisms.  In particular, Theorem~D
of \cite{BosaGabeSimsWhite19} states that a full
$\mathcal{O}_2$-stable $^*$-homomorphism with a separable and exact
domain has nuclear dimension zero if and only if it is nuclear and its
domain is quasidiagonal.  Noting that decomposition rank zero is
equivalent to nuclear dimension zero, Proposition~1.7 of
\cite{BosaGabeSimsWhite19} implies such a map enjoys approximate
factorizations as in Question~\ref{question}.  Again, via standard
reductions, the separability of $A$ is not needed in this result.

\textbf{Acknowledgments.}  The authors are happy to record their
gratitude to Stuart White, who asked the question leading to the main
result of this note.

\bibliographystyle{amsplain}


\providecommand{\bysame}{\leavevmode\hbox to3em{\hrulefill}\thinspace}
\providecommand{\MR}{\relax\ifhmode\unskip\space\fi MR }
\providecommand{\MRhref}[2]{%
  \href{http://www.ams.org/mathscinet-getitem?mr=#1}{#2}
}
\providecommand{\href}[2]{#2}

\end{document}